\numberwithin{equation}{section}
\numberwithin{figure}{section}
\numberwithin{table}{section}
\theoremstyle{plain}
\newtheorem{thm}{\protect\theoremname}[section]
  \theoremstyle{definition}
  \newtheorem{defn}[thm]{\protect\definitionname}
  \theoremstyle{remark}
  \newtheorem{rem}[thm]{\protect\remarkname}
  \theoremstyle{plain}
  \newtheorem{lem}[thm]{\protect\lemmaname}
  \theoremstyle{plain}
  \newtheorem{cor}[thm]{\protect\corollaryname}
  \theoremstyle{definition}
  \newtheorem{example}[thm]{\protect\examplename}
\DeclareMathOperator{\Add}{\textup{Add}}
\DeclareMathOperator{\Aff}{\textup{Aff}}
\DeclareMathOperator{\Alg}{\textup{Alg}}
\DeclareMathOperator{\Ann}{\textup{Ann}}
\DeclareMathOperator{\Arr}{\textup{Arr}}
\DeclareMathOperator{\Art}{\textup{Art}}
\DeclareMathOperator{\Ass}{\textup{Ass}}
\DeclareMathOperator{\Aut}{\textup{Aut}}
\DeclareMathOperator{\Autsh}{\underline{\textup{Aut}}}
\DeclareMathOperator{\Bi}{\textup{B}}
\DeclareMathOperator{\CAdd}{\textup{CAdd}}
\DeclareMathOperator{\CAlg}{\textup{CAlg}}
\DeclareMathOperator{\CMon}{\textup{CMon}}
\DeclareMathOperator{\CPMon}{\textup{CPMon}}
\DeclareMathOperator{\CRings}{\textup{CRings}}
\DeclareMathOperator{\CSMon}{\textup{CSMon}}
\DeclareMathOperator{\CaCl}{\textup{CaCl}}
\DeclareMathOperator{\Cart}{\textup{Cart}}
\DeclareMathOperator{\Cl}{\textup{Cl}}
\DeclareMathOperator{\Coh}{\textup{Coh}}
\DeclareMathOperator{\Coker}{\textup{Coker}}
\DeclareMathOperator{\Cov}{\textup{Cov}}
\DeclareMathOperator{\Der}{\textup{Der}}
\DeclareMathOperator{\Div}{\textup{Div}}
\DeclareMathOperator{\End}{\textup{End}}
\DeclareMathOperator{\Endsh}{\underline{\textup{End}}}
\DeclareMathOperator{\Ext}{\textup{Ext}}
\DeclareMathOperator{\Extsh}{\underline{\textup{Ext}}}
\DeclareMathOperator{\FAdd}{\textup{FAdd}}
\DeclareMathOperator{\FCoh}{\textup{FCoh}}
\DeclareMathOperator{\FGrad}{\textup{FGrad}}
\DeclareMathOperator{\FLoc}{\textup{FLoc}}
\DeclareMathOperator{\FMod}{\textup{FMod}}
\DeclareMathOperator{\FPMon}{\textup{FPMon}}
\DeclareMathOperator{\FRep}{\textup{FRep}}
\DeclareMathOperator{\FSMon}{\textup{FSMon}}
\DeclareMathOperator{\FVect}{\textup{FVect}}
\DeclareMathOperator{\Fibr}{\textup{Fibr}}
\DeclareMathOperator{\Fix}{\textup{Fix}}
\DeclareMathOperator{\Fl}{\textup{Fl}}
\DeclareMathOperator{\Fr}{\textup{Fr}}
\DeclareMathOperator{\Funct}{\textup{Funct}}
\DeclareMathOperator{\GAlg}{\textup{GAlg}}
\DeclareMathOperator{\GExt}{\textup{GExt}}
\DeclareMathOperator{\GHom}{\textup{GHom}}
\DeclareMathOperator{\GL}{\textup{GL}}
\DeclareMathOperator{\GMod}{\textup{GMod}}
\DeclareMathOperator{\GRis}{\textup{GRis}}
\DeclareMathOperator{\GRiv}{\textup{GRiv}}
\DeclareMathOperator{\Gal}{\textup{Gal}}
\DeclareMathOperator{\Gl}{\textup{Gl}}
\DeclareMathOperator{\Grad}{\textup{Grad}}
\DeclareMathOperator{\Hilb}{\textup{Hilb}}
\DeclareMathOperator{\Hl}{\textup{H}}
\DeclareMathOperator{\Hom}{\textup{Hom}}
\DeclareMathOperator{\Homsh}{\underline{\textup{Hom}}}
\DeclareMathOperator{\ISym}{\textup{Sym}^*}
\DeclareMathOperator{\Imm}{\textup{Im}}
\DeclareMathOperator{\Irr}{\textup{Irr}}
\DeclareMathOperator{\Iso}{\textup{Iso}}
\DeclareMathOperator{\Isosh}{\underline{\textup{Iso}}}
\DeclareMathOperator{\Ker}{\textup{Ker}}
\DeclareMathOperator{\LAdd}{\textup{LAdd}}
\DeclareMathOperator{\LAlg}{\textup{LAlg}}
\DeclareMathOperator{\LMon}{\textup{LMon}}
\DeclareMathOperator{\LPMon}{\textup{LPMon}}
\DeclareMathOperator{\LRings}{\textup{LRings}}
\DeclareMathOperator{\LSMon}{\textup{LSMon}}
\DeclareMathOperator{\Left}{\textup{L}}
\DeclareMathOperator{\Lex}{\textup{Lex}}
\DeclareMathOperator{\Loc}{\textup{Loc}}
\DeclareMathOperator{\M}{\textup{M}}
\DeclareMathOperator{\ML}{\textup{ML}}
\DeclareMathOperator{\MLex}{\textup{MLex}}
\DeclareMathOperator{\Map}{\textup{Map}}
\DeclareMathOperator{\Mod}{\textup{Mod}}
\DeclareMathOperator{\Mon}{\textup{Mon}}
\DeclareMathOperator{\Ob}{\textup{Ob}}
\DeclareMathOperator{\Obj}{\textup{Obj}}
\DeclareMathOperator{\PDiv}{\textup{PDiv}}
\DeclareMathOperator{\PGL}{\textup{PGL}}
\DeclareMathOperator{\PML}{\textup{PML}}
\DeclareMathOperator{\PMLex}{\textup{PMLex}}
\DeclareMathOperator{\PMon}{\textup{PMon}}
\DeclareMathOperator{\Pic}{\textup{Pic}}
\DeclareMathOperator{\Picsh}{\underline{\textup{Pic}}}
\DeclareMathOperator{\Pro}{\textup{Pro}}
\DeclareMathOperator{\Proj}{\textup{Proj}}
\DeclareMathOperator{\QAdd}{\textup{QAdd}}
\DeclareMathOperator{\QAlg}{\textup{QAlg}}
\DeclareMathOperator{\QCoh}{\textup{QCoh}}
\DeclareMathOperator{\QMon}{\textup{QMon}}
\DeclareMathOperator{\QPMon}{\textup{QPMon}}
\DeclareMathOperator{\QRings}{\textup{QRings}}
\DeclareMathOperator{\QSMon}{\textup{QSMon}}
\DeclareMathOperator{\R}{\textup{R}}
\DeclareMathOperator{\Rep}{\textup{Rep}}
\DeclareMathOperator{\Rings}{\textup{Rings}}
\DeclareMathOperator{\Riv}{\textup{Riv}}
\DeclareMathOperator{\SFibr}{\textup{SFibr}}
\DeclareMathOperator{\SMLex}{\textup{SMLex}}
\DeclareMathOperator{\SMex}{\textup{SMex}}
\DeclareMathOperator{\SMon}{\textup{SMon}}
\DeclareMathOperator{\SchI}{\textup{SchI}}
\DeclareMathOperator{\Sh}{\textup{Sh}}
\DeclareMathOperator{\Soc}{\textup{Soc}}
\DeclareMathOperator{\Spec}{\textup{Spec}}
\DeclareMathOperator{\Specsh}{\underline{\textup{Spec}}}
\DeclareMathOperator{\Stab}{\textup{Stab}}
\DeclareMathOperator{\Supp}{\textup{Supp}}
\DeclareMathOperator{\Sym}{\textup{Sym}}
\DeclareMathOperator{\TMod}{\textup{TMod}}
\DeclareMathOperator{\Top}{\textup{Top}}
\DeclareMathOperator{\Tor}{\textup{Tor}}
\DeclareMathOperator{\Vect}{\textup{Vect}}
\DeclareMathOperator{\alt}{\textup{ht}}
\DeclareMathOperator{\car}{\textup{char}}
\DeclareMathOperator{\codim}{\textup{codim}}
\DeclareMathOperator{\degtr}{\textup{degtr}}
\DeclareMathOperator{\depth}{\textup{depth}}
\DeclareMathOperator{\divis}{\textup{div}}
\DeclareMathOperator{\et}{\textup{et}}
\DeclareMathOperator{\ffpSch}{\textup{ffpSch}}
\DeclareMathOperator{\h}{\textup{h}}
\DeclareMathOperator{\ilim}{\displaystyle{\lim_{\longrightarrow}}}
\DeclareMathOperator{\ind}{\textup{ind}}
\DeclareMathOperator{\indim}{\textup{inj dim}}
\DeclareMathOperator{\lf}{\textup{LF}}
\DeclareMathOperator{\op}{\textup{op}}
\DeclareMathOperator{\ord}{\textup{ord}}
\DeclareMathOperator{\pd}{\textup{pd}}
\DeclareMathOperator{\plim}{\displaystyle{\lim_{\longleftarrow}}}
\DeclareMathOperator{\pr}{\textup{pr}}
\DeclareMathOperator{\pt}{\textup{pt}}
\DeclareMathOperator{\rk}{\textup{rk}}
\DeclareMathOperator{\tr}{\textup{tr}}
\DeclareMathOperator{\type}{\textup{r}}
\DeclareMathOperator*{\colim}{\textup{colim}}
\renewcommand{\Lex}{\textup{Mon}}
\theoremstyle{plain}
\newtheorem{thma}{}
\author[Tonini]{Fabio Tonini}
\address[Tonini]{Freie Universit\"at Berlin\\
    FB Mathematik und Informatik\\
    Arnimallee 3\\ Zimmer 112A\\
    14195 Berlin\\ Deutschland}
\email{tonini@zedat.fu-berlin.de}
  \providecommand{\corollaryname}{Corollary}
  \providecommand{\definitionname}{Definition}
  \providecommand{\examplename}{Example}
  \providecommand{\lemmaname}{Lemma}
  \providecommand{\remarkname}{Remark}
\providecommand{\theoremname}{Theorem}
\begin{document}

\title{Trace map and regularity of finite extensions of a DVR}

\maketitle
\global\long\def\A{\mathbb{A}}

\global\long\def\Ab{(\textup{Ab})}

\global\long\def\C{\mathbb{C}}

\global\long\def\Cat{(\textup{cat})}

\global\long\def\Di#1{\textup{D}(#1)}

\global\long\def\E{\mathcal{E}}

\global\long\def\F{\mathbb{F}}

\global\long\def\GCov{G\textup{-Cov}}

\global\long\def\Gcat{(\textup{Galois cat})}

\global\long\def\Gfsets#1{#1\textup{-fsets}}

\global\long\def\Gm{\mathbb{G}_{m}}

\global\long\def\GrCov#1{\textup{D}(#1)\textup{-Cov}}

\global\long\def\Grp{(\textup{Grps})}

\global\long\def\Gsets#1{(#1\textup{-sets})}

\global\long\def\HCov{H\textup{-Cov}}

\global\long\def\MCov{\textup{D}(M)\textup{-Cov}}

\global\long\def\MHilb{M\textup{-Hilb}}

\global\long\def\N{\mathbb{N}}

\global\long\def\PGor{\textup{PGor}}

\global\long\def\PGrp{(\textup{Profinite Grp})}

\global\long\def\PP{\mathbb{P}}

\global\long\def\Pj{\mathbb{P}}

\global\long\def\Q{\mathbb{Q}}

\global\long\def\RCov#1{#1\textup{-Cov}}

\global\long\def\RR{\mathbb{R}}

\global\long\def\Sch{\textup{Sch}}

\global\long\def\WW{\textup{W}}

\global\long\def\Z{\mathbb{Z}}

\global\long\def\acts{\curvearrowright}

\global\long\def\alA{\mathscr{A}}

\global\long\def\alB{\mathscr{B}}

\global\long\def\arr{\longrightarrow}

\global\long\def\arrdi#1{\xlongrightarrow{#1}}

\global\long\def\catC{\mathscr{C}}

\global\long\def\catD{\mathscr{D}}

\global\long\def\catF{\mathscr{F}}

\global\long\def\catG{\mathscr{G}}

\global\long\def\comma{,\ }

\global\long\def\covU{\mathcal{U}}

\global\long\def\covV{\mathcal{V}}

\global\long\def\covW{\mathcal{W}}

\global\long\def\duale#1{{#1}^{\vee}}

\global\long\def\fasc#1{\widetilde{#1}}

\global\long\def\fsets{(\textup{f-sets})}

\global\long\def\iL{r\mathscr{L}}

\global\long\def\id{\textup{id}}

\global\long\def\la{\langle}

\global\long\def\odi#1{\mathcal{O}_{#1}}

\global\long\def\ra{\rangle}

\global\long\def\set{(\textup{Sets})}

\global\long\def\sets{(\textup{Sets})}

\global\long\def\shA{\mathcal{A}}

\global\long\def\shB{\mathcal{B}}

\global\long\def\shC{\mathcal{C}}

\global\long\def\shD{\mathcal{D}}

\global\long\def\shE{\mathcal{E}}

\global\long\def\shF{\mathcal{F}}

\global\long\def\shG{\mathcal{G}}

\global\long\def\shH{\mathcal{H}}

\global\long\def\shI{\mathcal{I}}

\global\long\def\shJ{\mathcal{J}}

\global\long\def\shK{\mathcal{K}}

\global\long\def\shL{\mathcal{L}}

\global\long\def\shM{\mathcal{M}}

\global\long\def\shN{\mathcal{N}}

\global\long\def\shO{\mathcal{O}}

\global\long\def\shP{\mathcal{P}}

\global\long\def\shQ{\mathcal{Q}}

\global\long\def\shR{\mathcal{R}}

\global\long\def\shS{\mathcal{S}}

\global\long\def\shT{\mathcal{T}}

\global\long\def\shU{\mathcal{U}}

\global\long\def\shV{\mathcal{V}}

\global\long\def\shW{\mathcal{W}}

\global\long\def\shX{\mathcal{X}}

\global\long\def\shY{\mathcal{Y}}

\global\long\def\shZ{\mathcal{Z}}

\global\long\def\st{\ | \ }

\global\long\def\stA{\mathcal{A}}

\global\long\def\stB{\mathcal{B}}

\global\long\def\stC{\mathcal{C}}

\global\long\def\stD{\mathcal{D}}

\global\long\def\stE{\mathcal{E}}

\global\long\def\stF{\mathcal{F}}

\global\long\def\stG{\mathcal{G}}

\global\long\def\stH{\mathcal{H}}

\global\long\def\stI{\mathcal{I}}

\global\long\def\stJ{\mathcal{J}}

\global\long\def\stK{\mathcal{K}}

\global\long\def\stL{\mathcal{L}}

\global\long\def\stM{\mathcal{M}}

\global\long\def\stN{\mathcal{N}}

\global\long\def\stO{\mathcal{O}}

\global\long\def\stP{\mathcal{P}}

\global\long\def\stQ{\mathcal{Q}}

\global\long\def\stR{\mathcal{R}}

\global\long\def\stS{\mathcal{S}}

\global\long\def\stT{\mathcal{T}}

\global\long\def\stU{\mathcal{U}}

\global\long\def\stV{\mathcal{V}}

\global\long\def\stW{\mathcal{W}}

\global\long\def\stX{\mathcal{X}}

\global\long\def\stY{\mathcal{Y}}

\global\long\def\stZ{\mathcal{Z}}

\global\long\def\then{\ \Longrightarrow\ }

\global\long\def\L{\textup{L}}

\global\long\def\l{\textup{l}}

\begin{abstract}
We interpret the regularity of a finite and flat extension of a discrete
valuation ring in terms of the trace map of the extension.
\end{abstract}

\section*{Introduction}

Let $R$ be a ring and $A$ be an $R$-algebra which is projective
and finitely generated as $R$-module. We denote by $\tr_{A/R}\colon A\arr R$
the trace map and by $\widetilde{\tr}_{A/R}\colon A\arr A^{\vee}=\Hom_{R}(A,R)$
the map $a\longmapsto\tr_{A/R}(a\cdot-)$. It is a well known result
of commutative algebra that the étaleness of the extension $A/R$
is entirely encoded in the trace map $\tr_{A/R}$: $A/R$ is étale
if and only if the map $\widetilde{\tr}_{A/R}\colon A\arr\duale A$
is an isomorphism (see \cite[Proposition 4.10]{SGA1}). In this case,
if $R$ is a DVR (discrete valuation ring) it follows that $A$ is
regular (that is a product of Dedekind domains), while the converse
is clearly not true because extensions of Dedekind domains are often
ramified.

In this paper we show how to read the regularity of $A$ in terms
of the trace map $\tr_{A/R}$. In order to express our result we need
some notations and definitions. Let us assume from now on that the
ring $R$ is a DVR with residue field $k_{R}$. We first extend the
notion of tame extensions and ramification index: given a maximal
ideal $p$ of $A$ we set
\[
e(p,A/R)=\frac{\dim_{k_{R}}(A_{p}\otimes_{R}k_{R})}{[k(p):k_{R}]}
\]
where $k(p)=A/p$, and we call it the \emph{ramification index }of
$p$ in the extension $A/R$. Notice that $e(p,A/R)\in\N$ (see \ref{lem:ramification indexes and ranks}).
We say that $A/R$ is \emph{tame} (over the maximal ideal of $R$)
if the ramification indexes of all maximal ideals of $A$ are coprime
with $\car k_{R}$. Those definitions agree with the usual ones when
$A$ is a Dedekind domain. We also set
\[
\shQ_{A/R}=\Coker(A\arrdi{\widetilde{\tr}_{A/R}}\duale A)\comma f^{A/R}=\l(\shQ_{A/R})
\]
where $\l$ denotes the length function. Alternatively $f^{A/R}$
can be seen as the valuation of the discriminant section $\det\widetilde{\tr}_{A/R}$.
We also denote by $|\Spec(A\otimes_{R}\overline{k_{R}})|$ the number
of primes of $A\otimes_{R}\overline{k_{R}}$: this number can also
be computed as
\[
|\Spec(A\otimes_{R}\overline{k_{R}})|=\sum_{p\text{ maximal ideals of }A}[F_{p}:k_{R}]
\]
where $F_{p}$ denotes the maximal separable extension of $k_{R}$
inside $k(p)=A/p$ (see \ref{cor:Length of geometric fiber}). Finally
we will say that $A/R$ has separable residue fields (over the maximal
ideal of $R$) if for all maximal ideals $p$ of $A$ the finite extension
$k(p)/k_{R}$ is separable. The theorem we are going to prove is the
following:

\begin{thma}\label{main}
Let $R$ be a DVR and $A$ be a finite and flat $R$-algebra. Then we have the inequality
$$
f^{A/R}\geq\rk A-|\Spec(A\otimes_{R}\overline{k_{R}})|
$$
and the following conditions are equivalent:
\begin{enumerate}
\item the equality holds in the inequality above;
\item  $A$ is regular and $A/R$ is tame with separable residue fields;
\item $A/R$ is tame with separable residue fields and the $R$-module $\shQ_{A/R}$ is defined over $k_R$, that is $m_R\shQ_{A/R}=0$, where $m_R$ denotes the maximal ideal of $R$.
\end{enumerate}
\end{thma}

The implication $2)\then1)$ is classical, because it follows directly
from the relation between the different and ramification indexes (see
\cite[III, §6, Proposition 13]{Serre1979}). Notice that the étaleness
of $A/R$ is equivalent to any of the following conditions: $f^{A/R}=0$,
$\rk A=|\Spec(A\otimes_{R}\overline{k_{R}})|$, $\shQ_{A/R}=0$.

The above result has already been proved in my Ph.D. thesis \cite[Theorem 4.4.4]{Tonini2013a}
under the assumption that a finite solvable group $G$ acts on $A$
in a way that $A^{G}=R$ and using a completely different strategy:
using induction and finding a filtration of $R$-algebras of $R\subseteq A$
starting from a filtration of normal subgroups of $G$. \ref{main}
is an essential ingredient in the proof of \cite[Theorem C]{Tonini2015}
which generalizes \cite[Theorem 4.4.7]{Tonini2013a}.

The paper is organized as follows. In the first section we discuss
the notion of tameness and ramification index and recall some basic
facts of commutative algebra and, in particular, about the trace map.
In the second section we prove the \ref{main}.

\section*{Notation}

All rings and algebras in this paper are commutative with unity. 

Given a ring $R$ and a prime $q$ we denote by $k(q)$ the residue
field of $R_{q}$. If $A$ is a finite $R$-algebra we say that $A/R$
has separable residue fields over a prime $q$ of $R$ if for all
primes $p$ of $A$ lying over $q$ the finite extension of fields
$k(p)/k(q)$ is separable. We say that $A/R$ has separable residue
fields if it has this property over all primes of $R$.

The following conditions are equivalent (see \cite[Proposition 1.4.7]{EGAIV-1}):
$A/R$ is finite, flat and finitely presented; $A$ is finitely generated
and projective as $R$-module; $A$ is finitely presented as $R$-module
and for all primes $q$ of $R$ the $R_{q}$-module $A_{q}$ is free.
In this situation there is a well-defined trace function $\Hom_{R}(A,A)\arr R$
which extends the usual trace of matrices and commutes with arbitrary
extensions of scalars. The trace map of $A/R$, denoted by $\tr_{A/R}\colon A\arr R$
(or simply $\tr_{A}$), is the composition $A\arr\Hom_{R}(A,A)\arr R$,
where the first map is induced by the multiplication in $A$.

If $R$ is a local ring we denote by $m_{R}$ its maximal ideal and
by $k_{R}$ its residue field. A DVR is a discrete valuation ring.

If $k$ is a field we denote by $\overline{k}$ an algebraic closure
of $k$ and by $k^{s}$ a separable closure of $k$.

\section*{Acknowledgements}

I would like to thank Angelo Vistoli and Dajano Tossici for all the
useful conversations we had and all the suggestions I received.

\section{Preliminaries on tameness and trace map}

We fix a ring $R$ and a finite, flat and finitely presented $R$-algebra
$A$ over $R$, that is an $R$-algebra $A$ which is finitely presented
as $R$-module and such that $A_{q}$ is a free $R_{q}$-module of
finite rank for all prime $q$ of $R$.
\begin{defn}
Given a prime ideal $p$ of $A$ lying above the prime $q$ of $R$
we set
\[
e(p,A/R)=\frac{\dim_{k(q)}(A_{p}\otimes_{R_{q}}k(q))}{[k(p):k(q)]}
\]
and we call it the \emph{ramification index }of $p$ in the extension
$A/R$. We say that $A/R$ is \emph{tame }in $p\in\Spec A$ if $e(p,A/R)$
is coprime with $\car k(q)$, is \emph{tame }over $q\in\Spec R$ if
it is tame over all primes of $A$ over $q$ and, finally, we say
that it is \emph{tame }if it is tame over all primes of $R$ (or in
all primes of $A$).\end{defn}
\begin{rem}
The number $e(p,A/R)$ is always a natural number as shown below.
Moreover if $R$ and $A$ are Dedekind domains, the notion of ramification
index agrees with the usual one. \end{rem}
\begin{lem}
\label{lem:ramification indexes and ranks} Let $p$ be a prime of
$A$ lying over the prime $q$ of $R$ and denote by $F$ the maximal
separable extension of $k(q)$ inside $k(p)$. Then $e(p,A/R)$ is
a natural number and
\[
A_{p}\otimes_{R_{q}}\overline{k(q)}\simeq B_{1}\times\cdots\times B_{s}\text{ with }B_{i}\text{ local, }\dim_{\overline{k(q)}}B_{i}=e(p,A/R)[k(p):F]\text{ and }s=[F:k(q)]
\]
\end{lem}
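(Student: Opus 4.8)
The plan is to reduce to the case where $R$ is local with maximal ideal $q$. Replacing $R$ by $R_{q}$ and $A$ by $A\otimes_{R}R_{q}$ (the prime $p$ persists, since $p\cap R=q$ is disjoint from the localizing set), we may assume $q=m_{R}$, $k(q)=k_{R}$, and $A$ free of finite rank over $R$. Set $C=A_{p}\otimes_{R_{q}}k(q)=A_{p}/m_{R}A_{p}$. Since $A/m_{R}A$ is a finite $k_{R}$-algebra, hence Artinian, it is a finite product of local Artinian rings, and $C$ is the localization of $A/m_{R}A$ at the image $\overline p$ of $p$; thus $C$ is a local Artinian $k_{R}$-algebra whose residue field is the fraction field of $(A/m_{R}A)/\overline p=A/p$, namely $k(p)$.

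First I would deduce that $e(p,A/R)\in\N$ by identifying it with the length of $C$: in a finite-dimensional local Artinian $k_{R}$-algebra $C$ with residue field $k(p)$, every composition series of $C$ as a module over itself has all its quotients isomorphic to $k(p)$, so $\dim_{k_{R}}C=\l(C)\cdot[k(p):k_{R}]$. Hence
\[
e(p,A/R)=\frac{\dim_{k(q)}(A_{p}\otimes_{R_{q}}k(q))}{[k(p):k(q)]}=\l(C)\in\N .
\]

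For the structural statement, note $A_{p}\otimes_{R_{q}}\overline{k(q)}=C\otimes_{k_{R}}\overline{k_{R}}$, and I would pass through a separable closure $K$ of $k_{R}$ contained in $\overline{k_{R}}$. For any field extension $L/k_{R}$ the surjection $C\to k(p)$ has nilpotent kernel $m_{C}$, so $C\otimes_{k_{R}}L\to k(p)\otimes_{k_{R}}L$ is surjective with nilpotent kernel; hence the Artinian ring $C\otimes_{k_{R}}L$ has as many local factors as $k(p)\otimes_{k_{R}}L$ has primes. Writing $k(p)\otimes_{k_{R}}L=k(p)\otimes_{F}(F\otimes_{k_{R}}L)$ and using that $F/k_{R}$ is finite separable while $k(p)/F$ is purely inseparable: for $L=K$ one gets $F\otimes_{k_{R}}K\simeq K^{[F:k_{R}]}$, with the factors indexed by $\Hom_{k_{R}}(F,K)$, and each $k(p)\otimes_{F,\sigma}K$ is a local ring (its residue field is finite over the separably closed field $K$, hence purely inseparable over $K$). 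So $C\otimes_{k_{R}}K=E_{1}\times\cdots\times E_{t}$ with $t=[F:k_{R}]$ and each $E_{j}$ local Artinian. Now $\Gal(K/k_{R})$ acts on $C\otimes_{k_{R}}K$ through the second factor, permuting the $E_{j}$; since it acts transitively on $\Hom_{k_{R}}(F,K)$ it permutes the $E_{j}$ transitively, so they are pairwise isomorphic, and therefore $\dim_{K}E_{j}=\dim_{k_{R}}C/[F:k_{R}]=\l(C)[k(p):F]=e(p,A/R)[k(p):F]$ for every $j$.

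Finally I would base change from $K$ to $\overline{k_{R}}$, which is a purely inseparable extension. For each $j$ the ring $E_{j}$ modulo its nilradical is its residue field, a finite purely inseparable extension of $K$; tensoring with $\overline{k_{R}}$ produces a local ring, because the images of a purely inseparable set of generators become nilpotent after subtracting their unique roots in $\overline{k_{R}}$. Hence $E_{j}\otimes_{K}\overline{k_{R}}$ is local Artinian of dimension $\dim_{K}E_{j}$ over $\overline{k_{R}}$, and
\[
C\otimes_{k_{R}}\overline{k_{R}}=\prod_{j=1}^{t}\bigl(E_{j}\otimes_{K}\overline{k_{R}}\bigr)=B_{1}\times\cdots\times B_{s}
\]
is the decomposition into local factors, with $s=t=[F:k_{R}]$ and $\dim_{\overline{k_{R}}}B_{j}=e(p,A/R)[k(p):F]$, as claimed. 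The step I expect to be the main obstacle is the equality of the dimensions of the factors: since $\overline{k_{R}}/k_{R}$ need not be Galois this cannot be obtained in one stroke, and the device of first descending the separable part by Galois theory and then performing the purely inseparable base change is what makes the argument go through.
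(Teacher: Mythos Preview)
Your proof is correct, but it takes a genuinely different route from the paper's. The paper reduces (as you do) to the case where $R=k$ is a field and $A=C$ is local with residue field $L=k(p)$, and it also uses the decomposition $L\otimes_{k}\overline{k}\simeq\prod_{\sigma\in\Hom_{k}(F,\overline{k})}L\otimes_{F,\sigma}\overline{k}$ into local rings to count the factors. The difference lies in how the equality of the dimensions $\dim_{\overline{k}}B_{i}$ is established. You obtain it by symmetry: passing first to the separable closure $K$, you let $\Gal(K/k_{R})$ permute the local factors $E_{j}$ transitively (the action being semilinear, hence dimension-preserving), and only afterwards perform the purely inseparable base change $K\to\overline{k_{R}}$, which keeps each factor local. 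The paper instead works directly over $\overline{k}$ and uses the filtration by powers of $m_{A}$: writing $m_{A}^{q}/m_{A}^{q+1}\simeq L^{f(q)}$ and tensoring with $\overline{k}$, it gets $J_{i}^{q}/J_{i}^{q+1}\simeq C_{i}^{f(q)}$ uniformly in $i$, so that $\dim_{\overline{k}}B_{i}=(\dim_{\overline{k}}C_{i})\sum_{q}f(q)$ and $\dim_{k}A=[L:k]\sum_{q}f(q)$, from which the formula drops out once one knows $\dim_{\overline{k}}C_{i}=[L:F]$. Your Galois argument is cleaner and avoids the explicit filtration, at the cost of the two-stage base change; the paper's filtration argument is more hands-on but yields slightly more, namely that the graded pieces of each $B_{i}$ are governed by the same sequence $f(q)$.
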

\begin{proof}
We can assume that $R=k(q)=k$ is a field and that $A$ is local.
Set also $L=k(p)$ and write
\[
L\otimes_{k}\overline{k}\simeq C_{1}\times\cdots\times C_{r}\comma A\otimes_{k}\overline{k}\simeq B_{1}\times\cdots\times B_{s}
\]
for the decompositions into local rings. In particular we have that
$r=s$ because $A\otimes_{k}\overline{k}\arr L\otimes_{k}\overline{k}$
is surjective with nilpotent kernel. Moreover this map splits as a
product of surjective maps $B_{i}\arr C_{i}$. Denote by $J_{i}$
their kernels and by $\phi\colon A\otimes_{k}\overline{k}\arr B_{1}\times\cdots\times B_{s}$
the previous isomorphism of rings. For all $t\in\N$ (including $t=0$)
we have 
\[
\phi((m_{A}\otimes_{k}\overline{k})^{t})=J_{1}^{t}\times\cdots\times J_{s}^{t}
\]
In particular for all $t\in\N$ we have
\[
\left(\frac{J_{1}^{t}}{J_{1}^{t+1}}\right)\times\cdots\times\left(\frac{J_{s}^{t}}{J_{s}^{t+1}}\right)\simeq\frac{(m_{A}\otimes_{k}\overline{k})^{t}}{(m_{A}\otimes_{k}\overline{k})^{t+1}}\simeq\frac{m_{A}^{t}}{m_{A}^{t+1}}\otimes_{k}\overline{k}\simeq(L\otimes_{k}\overline{k})^{f(t)}\simeq C_{1}^{f(t)}\times\cdots\times C_{s}^{f(t)}
\]
as $L\otimes_{k}\overline{k}$-modules, where $f(t)=\dim_{L}(m_{A}^{t}/m_{A}^{t+1})$.
We can conclude that
\[
\left(\frac{J_{i}^{t}}{J_{i}^{t+1}}\right)\simeq C_{i}^{f(t)}\text{ for all }i,t
\]
In particular
\[
\dim_{\overline{k}}B_{i}=\sum_{t\in\N}\dim_{\overline{k}}\left(\frac{J_{i}^{t}}{J_{i}^{t+1}}\right)=(\dim_{\overline{k}}C_{i})\sum_{t\in\N}f(t)
\]
because $J_{i}$ is nilpotent. Similarly we have
\[
\dim_{k}A=\sum_{t\in N}\dim_{k}\left(\frac{m_{A}^{t}}{m_{A}^{t+1}}\right)=[L:k]\sum_{t\in\N}f(t)
\]
In particular $[L:k]\mid\dim_{k}A$, so that the ramification index
is a natural number. By a direct computation we see that everything
follows if we show that $\dim_{\overline{k}}C_{i}=[L:F]$. Since $F/k$
is separable we know that $F\otimes_{k}\overline{k}\simeq\overline{k}^{[F:k]}$.
Each factor corresponds to an embedding $\sigma\colon F\arr\overline{k}$
such that $\sigma_{|k}=\id_{k}$ and
\[
L\otimes_{k}\overline{k}\simeq L\otimes_{F}(F\otimes_{k}\overline{k})\simeq\prod_{\sigma\in\Hom_{k}(F,\overline{k})}L\otimes_{F,\sigma}\overline{k}
\]
This is exactly the decomposition into local rings because, since
$L/F$ is purely inseparable, all the rings $L\otimes_{F,\sigma}\overline{k}$
are local. This ends the proof.\end{proof}
\begin{cor}
\label{cor:Length of geometric fiber} Let $q$ be a prime of $R$.
Then
\[
|\Spec(A\otimes_{R}\overline{k(q)})|=\sum_{\begin{array}{c}
p\text{ primes of}\\
A\text{ over }q
\end{array}}[F_{p}:k(q)]
\]
where $F_{p}$ denotes the maximal separable extension of $k(q)$
inside $k(p)$.\end{cor}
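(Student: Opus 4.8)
The plan is to reduce the problem to a single fiber of $\Spec A\arr\Spec R$ and then apply Lemma \ref{lem:ramification indexes and ranks} to each local component of that fiber. First I would observe that $\Spec(A\otimes_R\overline{k(q)})$ depends only on the finite $k(q)$-algebra $A\otimes_R k(q)$, which is Artinian, hence a finite product of local rings. Localizing $A$ at $q$ (which changes neither the fiber nor $k(q)$), the ring $A_q$ is finite over the local ring $R_q$, hence semilocal, and its maximal ideals are precisely the primes $p$ of $A$ lying over $q$. Consequently
\[
A\otimes_R k(q)\simeq\prod_{p\text{ over }q}A_p\otimes_{R_q}k(q),
\]
each factor $A_p\otimes_{R_q}k(q)$ being a local Artinian $k(q)$-algebra with residue field $k(p)$.

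Next I would base change to $\overline{k(q)}$. Since tensor products distribute over finite products and $\Spec$ sends a finite product of rings to the disjoint union of their spectra, the previous decomposition gives
\[
|\Spec(A\otimes_R\overline{k(q)})|=\sum_{p\text{ over }q}|\Spec(A_p\otimes_{R_q}\overline{k(q)})|.
\]
For each prime $p$ over $q$, Lemma \ref{lem:ramification indexes and ranks} applied to $p$ provides a decomposition $A_p\otimes_{R_q}\overline{k(q)}\simeq B_1\times\cdots\times B_s$ into local rings with $s=[F_p:k(q)]$, so that $|\Spec(A_p\otimes_{R_q}\overline{k(q)})|=[F_p:k(q)]$. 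Substituting this into the sum yields the asserted formula.

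I do not expect any genuine obstacle here: the argument is essentially a bookkeeping step combined with a direct invocation of the preceding lemma. The only points deserving a line of justification are the identification of the maximal ideals of the Artinian fiber $A\otimes_R k(q)$ with the primes of $A$ over $q$ (standard, from the description of the fibers of $\Spec A\arr\Spec R$), the identification of the corresponding local factor with $A_p\otimes_{R_q}k(q)$, and the compatibility of the geometric base change $-\otimes_{k(q)}\overline{k(q)}$ with this product decomposition, all of which are routine.
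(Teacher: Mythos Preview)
Your proposal is correct and follows essentially the same approach as the paper: decompose $A\otimes_R k(q)$ as the product of the local rings $A_p\otimes_{R_q}k(q)$ over the primes $p$ above $q$, then apply Lemma~\ref{lem:ramification indexes and ranks} to each factor after base change to $\overline{k(q)}$. The paper states this in a single sentence, while you spell out the bookkeeping, but the argument is the same.
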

\begin{proof}
It follows from \ref{lem:ramification indexes and ranks} using the
fact that $A\otimes_{R}k(q)$ is the product of the $A_{p}\otimes_{R_{q}}k(q)$
for $p$ running through all primes of $A$ over $q$. \end{proof}
\begin{defn}
Let $p$ be a prime of $A$ lying over the prime $q$ of $R$. We
denote by $h(p,A/R)$ the common length of the localizations of $A_{p}\otimes_{R_{q}}\overline{k(q)}$,
that is, following notation from \ref{lem:ramification indexes and ranks},
$h(p,A/R)=\dim_{k}B_{i}=e(p,A/R)[k(p):F]$.\end{defn}
\begin{lem}
\label{lem:invariance of h} Let $p$ be a prime of $A$, $R'$ be
an $R$-algebra and $p'$ be a prime of $A'=A\otimes_{R}R'$ over
the prime $p$. Then
\[
h(p,A/R)=h(p',A'/R')
\]
\end{lem}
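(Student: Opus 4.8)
The plan is to reduce the statement to one about finite algebras over fields and then to a single computation with nilpotent ideals performed after base change to an algebraic closure. Write $q$ and $q'$ for the primes of $R$ and $R'$ lying below $p$ and $p'$; since $p'$ lies over $p$ and $p$ over $q$, also $q'$ lies over $q$, so there is a field extension $k(q)\subseteq k(q')$. First I would record that $B:=A_{p}\otimes_{R_{q}}k(q)$ is a nonzero local Artinian $k(q)$-algebra finite over $k(q)$ (it equals $A_{p}/qA_{p}$, a localization at a prime of the finite-dimensional $k(q)$-algebra $A_{q}/qA_{q}$, and is local because $A_{p}$ is), and likewise that $B':=A'_{p'}\otimes_{R'_{q'}}k(q')$ is a nonzero local Artinian $k(q')$-algebra finite over $k(q')$. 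By construction $h(p,A/R)$ is the common length of the local rings of $B\otimes_{k(q)}\overline{k(q)}$, and $h(p',A'/R')$ is the common length of the local rings of $B'\otimes_{k(q')}\overline{k(q')}$.

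The one identification that needs a little care is that $B'$ occurs as a local ring of $B\otimes_{k(q)}k(q')$ at a prime lying over $m_{B}$. To see this I would use $A'\otimes_{R'}k(q')=A\otimes_{R}k(q')=(A\otimes_{R}k(q))\otimes_{k(q)}k(q')$, decompose the Artinian ring $A\otimes_{R}k(q)$ into its local factors — the one corresponding to $p$ being exactly $B$ — and note that the prime of $A'\otimes_{R'}k(q')$ attached to $p'$ contracts to $p$ in $A$, hence lies in the factor $B\otimes_{k(q)}k(q')$; localizing there produces $B'$.

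The heart of the matter is the following claim: if $B$ is a nonzero local Artinian algebra finite over a field $k$, then for every algebraically closed extension field $L$ of $k$ all local rings of $B\otimes_{k}L$ have the same length, and this common value $\l(B)$ is independent of $L$. To prove it I would choose an embedding $\overline{k}\hookrightarrow L$ over $k$, write $B\otimes_{k}L=(B\otimes_{k}\overline{k})\otimes_{\overline{k}}L$, and apply \ref{lem:ramification indexes and ranks} to $B$ to obtain $B\otimes_{k}\overline{k}\simeq C_{1}\times\cdots\times C_{s}$ with the $C_{i}$ local and of equal $\overline{k}$-dimension. Since each $C_{i}$ is finite over the algebraically closed field $\overline{k}$ its residue field is $\overline{k}$, so $C_{i}=\overline{k}\oplus m_{C_{i}}$ as $\overline{k}$-vector spaces with $m_{C_{i}}$ nilpotent; hence $C_{i}\otimes_{\overline{k}}L=L\oplus(m_{C_{i}}\otimes_{\overline{k}}L)$, where $m_{C_{i}}\otimes_{\overline{k}}L$ is a nilpotent ideal with field quotient $L$, so $C_{i}\otimes_{\overline{k}}L$ is local with residue field $L$ and $\l(C_{i}\otimes_{\overline{k}}L)=\dim_{L}(C_{i}\otimes_{\overline{k}}L)=\dim_{\overline{k}}C_{i}$. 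Therefore the local rings of $B\otimes_{k}L$ are precisely the $C_{i}\otimes_{\overline{k}}L$, all of length $\dim_{\overline{k}}C_{1}$, which plainly does not involve $L$.

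Granting the claim, the proof finishes quickly. Taking $L=\overline{k(q)}$ gives $h(p,A/R)=\l(B)$. On the other hand, since $B'$ is a local ring of $B\otimes_{k(q)}k(q')$, the ring $B'\otimes_{k(q')}\overline{k(q')}$ is a product of local rings of $(B\otimes_{k(q)}k(q'))\otimes_{k(q')}\overline{k(q')}=B\otimes_{k(q)}\overline{k(q')}$; applying the claim to $B$ over $k(q)$ with $L=\overline{k(q')}$, every local ring of $B\otimes_{k(q)}\overline{k(q')}$ has length $\l(B)$, so $h(p',A'/R')=\l(B)=h(p,A/R)$. The only real obstacle is the prime-bookkeeping through the successive base changes and localizations — above all the identification of $B'$ as a localization of $B\otimes_{k(q)}k(q')$ at a prime over $m_{B}$; everything else is \ref{lem:ramification indexes and ranks} together with the elementary observation that tensoring a local Artinian algebra with residue field $\overline{k}$ by a field extension keeps it local and leaves its dimension unchanged.
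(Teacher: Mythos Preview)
Your argument is correct and follows essentially the same strategy as the paper: reduce to the situation of a local Artinian algebra $B$ over a field, and then use that a local Artinian algebra over an algebraically closed field remains local (with the same dimension) after any field extension. The paper compresses this into three lines by first replacing $R,R'$ with $k(q),k(q')$ and $A$ with the local ring $B$, and then invoking the definition of $h$ to further assume both base fields are algebraically closed, after which $A'=A\otimes_{k}k'$ is automatically local and the equality is $\dim_{k'}A'=\dim_{k}A$; your version spells out the prime-tracking behind these reductions and packages the second step as an explicit invariance claim (proved via \ref{lem:ramification indexes and ranks}) valid for every algebraically closed extension $L$ of $k(q)$.
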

\begin{proof}
We can assume that $R=k$ and $R'=k'$ are fields and that $A$ is
local. Moreover by definition of the function $h(-)$ we can also
assume that $k$ and $k'$ are algebraically closed. In this case
$h(p,A/k)=\dim_{k}A$ and, since $A'=A\otimes_{k}k'$ is again local,
$h(p',A'/k')=\dim_{k'}A'=\dim_{k}A$.\end{proof}
\begin{cor}
\label{cor:tameness and h} Let $p$ be a prime of $A$. Then $A/R$
is tame in $p$ and $k(p)/k(q)$ is separable if and only if the number
$h(p,A/R)$ is coprime with $\car k(q)$.\end{cor}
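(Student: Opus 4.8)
First I would reduce to the situation where $R=k$ is an algebraically closed field and $A$ is local, which is legitimate because both sides of the claimed equivalence only depend on the closed fibre and are insensitive to base change to $\overline{k(q)}$: the quantity $h(p,A/R)$ is invariant under such base change by \ref{lem:invariance of h}, and tameness of $A/R$ in $p$ together with separability of $k(p)/k(q)$ can be tested after tensoring with $\overline{k(q)}$ using the numerics of \ref{lem:ramification indexes and ranks}. So set $k=\overline{k(q)}$, put $e=e(p,A/R)$, and write $L=k(p)$, $F$ for the maximal separable subextension of $L/k$. In this algebraically closed setting $F=k$, so $[F:k]=1$, the ring $A$ has a single point, and \ref{lem:ramification indexes and ranks} gives $h(p,A/R)=\dim_k A = e\cdot[L:k]$, where $[L:k]=[L:F]$ is a power of $\car k$ (it is $1$ if $\car k=0$).

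Now I would simply compare divisibilities by $\car k$. Write $p=\car k(q)=\car k$ when this is positive. The number $h(p,A/R)=e\cdot[L:F]$ is coprime with $p$ if and only if both factors are coprime with $p$; but $[L:F]$ is a $p$-power, hence coprime with $p$ exactly when $[L:F]=1$, i.e. exactly when $L/k(q)$ is separable (equivalently $k(p)/k(q)$ is separable before reduction — separability is preserved and detected by the residue-field extension). Granting that, $h(p,A/R)$ coprime with $p$ becomes equivalent to $e$ coprime with $p$ together with $L/k(q)$ separable, and $e$ coprime with $\car k(q)$ is by definition tameness of $A/R$ in $p$. In characteristic zero there is nothing to check: every extension is separable and every positive integer is coprime with $0$, so both conditions hold automatically and the equivalence is trivially true.

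The only genuinely delicate point is making sure the passage to $\overline{k(q)}$ does not distort the separability statement: I must check that $k(p)/k(q)$ is separable if and only if, after base change, the (unique, since $k$ is algebraically closed) residue field $L$ of the local ring $A_p\otimes_{R_q}\overline{k(q)}$ coincides with $\overline{k(q)}$ — equivalently $[L:F]=1$ in the notation of \ref{lem:ramification indexes and ranks}. This is exactly what the proof of that lemma already establishes, since there $[L:F]=[L:k(p)\text{'s separable part}]$ is identified with the degree of the purely inseparable part of $k(p)/k(q)$, which is $1$ precisely when $k(p)/k(q)$ is separable. With that identification in hand the corollary follows immediately from the factorization $h(p,A/R)=e(p,A/R)\cdot[L:F]$ and the fact that $[L:F]$ is a power of the residue characteristic.
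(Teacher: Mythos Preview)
Your core argument is exactly the paper's: use \ref{lem:ramification indexes and ranks} to write $h(p,A/R)=e(p,A/R)\cdot[L:F]$ (with $L=k(p)$ and $F$ the maximal separable subextension of $L/k(q)$), then observe that $[L:F]$ is $1$ or a power of $\car k(q)$, so the product is coprime to the characteristic precisely when $e$ is coprime and $[L:F]=1$, i.e.\ when $A/R$ is tame in $p$ and $k(p)/k(q)$ is separable.

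The detour through $\overline{k(q)}$ is unnecessary and, as written, a bit tangled. After you set $k=\overline{k(q)}$ you speak of ``the maximal separable subextension of $L/k$'' with $L=k(p)$, but $k(p)$ is an extension of $k(q)$, not of $\overline{k(q)}$; and your assertion that ``the ring $A$ has a single point'' after base change is not right in general --- \ref{lem:ramification indexes and ranks} tells you $A_p\otimes_{R_q}\overline{k(q)}$ splits into $[F:k(q)]$ local factors, so you would have to pass to one of them. None of this is fatal, because once you unwind everything you are simply invoking the formula $h=e\cdot[L:F]$ over $k(q)$ anyway, which is already available without base change. The paper's proof therefore just reduces to $R=k(q)$ a field and $A$ local, reads off the formula from \ref{lem:ramification indexes and ranks}, and concludes in one line; you should do the same and drop the passage to the algebraic closure.
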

\begin{proof}
We can assume that $R=k=k(q)$ is a field and $A$ is local with residue
field $L$. Let also $F$ be the maximal separable extension of $k$
inside $L$. Thanks to \ref{lem:ramification indexes and ranks},
the last condition in the statement is that the number $e(m_{A},A/k)[L:F]$
is coprime with $\car k$. Since $L/F$ is purely inseparable, so
that $[L:F]$ is either $1$ or a power of $\car k$, this is the
same as $ $$A/k$ being tame and $L=F$, that is $L/k$ is separable.\end{proof}
\begin{rem}
Let $q$ be a prime of $R$, $R'$ be an $R$-algebra and $q'$ be
a prime of $R'$ over $R$. By \ref{lem:invariance of h} and \ref{cor:tameness and h}
it follows that $A/R$ is tame over $q$ and $A\otimes_{R}k(q)$ has
separable residue fields if and only if the same is true for the extension
$(A\otimes_{R}R')/R'$ with respect to the prime $q'$. On the other
hand tameness alone does not satisfy the same base change property,
and, in particular, the function $e(-)$ does not satisfy Lemma \ref{lem:invariance of h}.
The counterexample is a finite purely inseparable extension $L/k$:
we have that $e(m_{k},L/k)=1$, so that $L/k$ is tame, while $e(m_{\overline{k}},L\otimes_{k}\overline{k}/\overline{k})=[L:k]$
because $L\otimes_{k}\overline{k}$ is local, so that $L\otimes_{k}\overline{k}/k$
is not tame.\end{rem}
\begin{lem}
\label{lem:computing tr over a field} Assume that $R=k$ is a field,
that $A$ is local and set $\pi\colon A\arr k_{A}$ for the projection.
Then
\[
\tr_{A/k}=e(m_{A},A/k)\tr_{k_{A}/k}\circ\pi
\]
\end{lem}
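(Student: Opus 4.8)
The plan is to compute $\tr_{A/k}(a)$ for $a\in A$ by passing to the associated graded of the $m_A$-adic filtration, exactly in the spirit of the proof of \ref{lem:ramification indexes and ranks}. Since $A$ is a finite $k$-algebra and local, $m_A$ is nilpotent, so we have a finite filtration by ideals
\[
A=m_A^{0}\supseteq m_A^{1}\supseteq\cdots\supseteq m_A^{N}=0 .
\]
Multiplication by any $a\in A$ preserves each $m_A^{q}$ (each is an ideal), so by additivity of the trace along short exact sequences of finite dimensional $k$-vector spaces equipped with a compatible endomorphism (pick a basis adapted to the filtration; the matrix of ``multiplication by $a$'' becomes block upper triangular), one gets
\[
\tr_{A/k}(a)=\sum_{q=0}^{N-1}\tr_{k}\bigl(a\mid m_A^{q}/m_A^{q+1}\bigr),
\]
where $a\mid m_A^{q}/m_A^{q+1}$ denotes the $k$-linear endomorphism of the quotient induced by multiplication by $a$.

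Next I would observe that this induced endomorphism depends only on the image $\overline{a}=\pi(a)\in k_A=:L$, because $m_A\cdot m_A^{q}\subseteq m_A^{q+1}$; concretely it is multiplication by the scalar $\overline{a}$ on the $L$-vector space $m_A^{q}/m_A^{q+1}$. Writing $f(q)=\dim_{L}(m_A^{q}/m_A^{q+1})$, so that $m_A^{q}/m_A^{q+1}\simeq L^{f(q)}$ as $L$-vector spaces (as in the cited lemma), multiplication by $\overline{a}$ on $L^{f(q)}$ is block diagonal with $f(q)$ identical blocks, each being multiplication by $\overline{a}$ on $L$; hence its $k$-trace is $f(q)\,\tr_{L/k}(\overline{a})$. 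Summing over $q$ gives $\tr_{A/k}(a)=\bigl(\sum_{q}f(q)\bigr)\tr_{L/k}(\overline{a})$.

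Finally I would identify the coefficient. Counting $k$-dimensions through the same filtration yields $\dim_{k}A=\sum_{q}\dim_{k}(m_A^{q}/m_A^{q+1})=[L:k]\sum_{q}f(q)$, so $\sum_{q}f(q)=\dim_{k}A/[L:k]=e(m_A,A/k)$ by definition of the ramification index. Substituting gives $\tr_{A/k}(a)=e(m_A,A/k)\,\tr_{L/k}(\pi(a))$ for all $a$, i.e. $\tr_{A/k}=e(m_A,A/k)\,\tr_{k_A/k}\circ\pi$, as claimed. The only point that needs genuine care is the additivity of the trace along the filtration combined with the observation that the graded pieces are honest $L$-vector spaces on which $a$ acts through the scalar $\overline{a}$; the dimension count is already essentially carried out in the proof of \ref{lem:ramification indexes and ranks}.
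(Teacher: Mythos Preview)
Your proof is correct and shares the paper's underlying idea---compute the trace using the $m_A$-adic filtration---but you execute it more conceptually. The paper chooses an explicit $k$-basis $\{x_{\alpha,i}y_\beta\}$ of $A$ adapted to the filtration (the $x_{\alpha,i}$ lift an $L$-basis of $m_A^{i}/m_A^{i+1}$, the $y_\beta$ lift a $k$-basis of $L$) and then computes matrix entries by hand: it checks directly that $\tr_A$ vanishes on $m_A$, and for the remaining basis elements $y_\beta$ tracks the structure constants $b_{\beta,\delta,q}$ of $L/k$ through each level of the filtration to find $\tr_A(y_\beta)=\bigl(\sum_{\alpha,i}1\bigr)\tr_{L/k}(\pi(y_\beta))$. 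You replace this explicit bookkeeping by two clean observations: additivity of the trace along an invariant filtration, and the fact that the action of $a$ on each graded piece $m_A^{q}/m_A^{q+1}\simeq L^{f(q)}$ is just scalar multiplication by $\pi(a)$. Both arguments arrive at the same constant $\sum_q f(q)=e(m_A,A/k)$ via the same dimension count; your route avoids the coefficient-chasing at the cost of invoking (the entirely standard) additivity of trace, while the paper's explicit basis makes everything visible at the level of matrices.
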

\begin{proof}
Set $P=m_{A}$, $L=k_{A}$ and let $x_{1,i},\dots,x_{r_{i},i}\in P^{i}$
be elements whose projections form an $L$-basis of $P^{i}/P^{i+1}$.
We set $x_{1,0}=1$. Let also $y_{1},\dots,y_{s}\in A$ be elements
whose projections form a $k$-basis of $L$, where $s=\dim_{k}L$.
It is easy to see that the collection
\[
\{x_{\alpha,i}y_{\beta}\}_{1\leq\alpha\leq r_{i},1\leq\beta\leq s}
\]
is a $k$-basis of $P^{i}/P^{i+1}$ for all $i\geq0$. By an inverse
induction starting from the nilpotent index of $P$, it also follows
that
\[
\shB_{n}=\{x_{\alpha,i}y_{\beta}\}_{1\leq\alpha\leq r_{i},1\leq\beta\leq s,i\geq n}
\]
is a $k$-basis of $P^{n}$ for all $n\geq0$. In particular, when
$n=0$ we get a $k$-basis of $A=P^{0}$.

We are going to compute the trace map $\tr_{A}$ over the basis $\shB_{0}$.
Consider an index $i>0$. For all possible $\alpha,\beta,\gamma,\delta,j$
we have that
\[
z=(x_{\alpha,i}y_{\beta})(x_{\gamma,j}y_{\delta})\in P^{i+j}\subseteq P^{j+1}
\]
Thus $z$ is a linear combination of vectors in $\shB_{j+1}$, which
does not contain $(x_{\gamma,j}y_{\delta})$. It follows that $\tr_{A}(x_{\alpha,i}y_{\beta})=0$
for all $i>0$, that is $\tr_{A}(P)=0$ which agrees with the formula
in the statement. It remains to compute $\tr_{A}(y_{\beta})$. Write
\[
y_{\beta}y_{\delta}=\sum_{q}b_{\beta,\delta,q}y_{q}+u_{\beta,\delta}\text{ with \ensuremath{u_{\beta,\delta}\in P}\text{ and }}b_{\beta,\delta,q}\in k
\]
It follows that 
\[
\tr_{L/k}(\pi(y_{\beta}))=\sum_{\delta}b_{\beta,\delta,\delta}
\]
Let's multiply now $y_{\beta}$ with an element $x_{\alpha,i}y_{\delta}$,
obtaining
\[
z=y_{\beta}(x_{\alpha,i}y_{\delta})=x_{\alpha,i}u_{\beta,\delta}+\sum_{q}b_{\beta,\delta,q}x_{\alpha,i}y_{q}
\]
If $i=0$, so that $\alpha=1$ and $x_{1,0}=1$, the coefficient of
$z$ with respect to $y_{\delta}$ is $b_{\beta,\delta,\delta}$ because
$u_{\beta,\delta}\in P$. If $i>0$ then the coefficient of $z$ with
respect to $(x_{\alpha,i}y_{\delta})$ is again $b_{\beta,\delta,\delta}$
because $x_{\alpha,i}u_{\beta,\delta}\in P^{i+1}$ and thus can be
written using only the vectors in $\shB_{i+1}$. In conclusion we
have that
\[
\tr_{A}(y_{\beta})=\sum_{i,\alpha,\delta}b_{\beta,\delta,\delta}=(\sum_{\delta}b_{\beta,\delta,\delta})(\sum_{\alpha,i}1)=\tr_{L}(\pi(y_{\beta}))C\text{ with }C=(\sum_{i,\alpha}1)
\]
Thus $\tr_{A}=C\tr_{L}\circ\pi$ and, finally,
\[
C=(\sum_{i,\alpha}1)=\sum_{i\geq0}\dim_{L}(\frac{P^{i}}{P^{i+1}})=\frac{\dim_{k}A}{[L:k]}=e(m_{A},A/k)
\]
\end{proof}
\begin{cor}
\label{cor:ker of the trace} Assume that $R$ and $A$ are local.
Then\end{cor}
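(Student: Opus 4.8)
The plan is to reduce everything to the case where $R$ is a field by base change to the residue field $k_R$, and then to quote Lemma~\ref{lem:computing tr over a field}.

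First I would record that, since $R$ is local and $A/R$ is finite and flat, $A$ is a free $R$-module of finite rank, so $\duale A$ is free as well, and both $\tr_{A/R}$ and $\widetilde{\tr}_{A/R}$ are compatible with the base change $R\arr k_R$; writing $\overline{A}=A\otimes_R k_R$ this gives $\tr_{A/R}\otimes_R k_R=\tr_{\overline{A}/k_R}$ and $\widetilde{\tr}_{A/R}\otimes_R k_R=\widetilde{\tr}_{\overline{A}/k_R}$. Because $A$ is integral over the local ring $R$, its maximal ideal $m_A$ contracts to $m_R$, so $R\arr A$ is a local homomorphism; hence $\overline{A}$ is a local Artinian $k_R$-algebra with residue field $k_A$ and maximal ideal $m_A/m_R A$, and by the definitions $e(m_A/m_R A,\overline{A}/k_R)=e(m_A,A/R)$, while Lemma~\ref{lem:invariance of h} gives $h(m_A/m_R A,\overline{A}/k_R)=h(m_A,A/R)$.

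Next I would apply Lemma~\ref{lem:computing tr over a field} to $\overline{A}/k_R$, obtaining $\tr_{\overline{A}/k_R}=e(m_A,A/R)\cdot\tr_{k_A/k_R}\circ\overline{\pi}$, where $\overline{\pi}\colon\overline{A}\arr k_A$ is the projection; in particular $\tr_{A/R}(m_A)\subseteq m_R$. Since $\tr_{A/R}(A)$ is an ideal of the local ring $R$, the map $\tr_{A/R}$ is surjective if and only if $\tr_{\overline{A}/k_R}\neq0$, and by the displayed formula this happens exactly when $e(m_A,A/R)$ is invertible in $k_R$ — that is, $A/R$ is tame in $m_A$ — and $\tr_{k_A/k_R}\neq0$. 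Here I would invoke the classical fact that the trace form of a finite field extension is nonzero (equivalently, nondegenerate) precisely when the extension is separable, which turns the second condition into separability of $k_A/k_R$; Corollary~\ref{cor:tameness and h} then rephrases the conjunction of the two conditions as $h(m_A,A/R)$ being coprime with $\car k_R$. Should the statement also record the kernel in the surjective case, one observes that $\overline{\pi}^{-1}(\Ker\tr_{k_A/k_R})$ is then a $k_R$-hyperplane of $\overline{A}$ containing $m_A/m_R A$ and pulls it back along $A\arr\overline{A}$; and by the same nondegeneracy $\Ker(\widetilde{\tr}_{A/R}\otimes_R k_R)$ equals $m_A/m_R A$ in this case and all of $\overline{A}$ otherwise.

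I do not expect a genuine obstacle: the only inputs beyond bookkeeping are Lemma~\ref{lem:computing tr over a field}, already available, and the separability criterion for nonvanishing of the field trace form, which is standard. The one point that needs a little care is the reduction step — checking that $\overline{A}$ stays local and that $e(-)$ and $h(-)$ are unchanged modulo $m_R$ — and this is precisely where one uses that $R\arr A$ is a local homomorphism.
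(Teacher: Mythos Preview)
The corollary actually has two enumerated parts (they sit outside the \texttt{cor} environment in the source, which is why you only saw the hypothesis):
\begin{enumerate}
\item $\tr_{A/R}(m_A)\subseteq m_R$;
\item if $k_A=k_R$ and $\rk A\in R^{*}$ then $\Ker\tr_{A/R}\subseteq m_A$.
\end{enumerate}

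For part (1) your argument is exactly the paper's: base change to $k_R$ and read off $\tr_{\overline A/k_R}(m_{\overline A})=0$ from Lemma~\ref{lem:computing tr over a field}.

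The rest of your write-up, however, drifts off target. The surjectivity criterion you develop (tame $+$ separable residue field $\iff$ $\tr$ onto $\iff$ $h$ coprime to $\car k_R$) is not this corollary at all --- it is the content of the \emph{next} result, Lemma~\ref{lem:when tr is surjective}. Part (2) has the specific hypothesis $k_A=k_R$ and $\rk A\in R^{*}$ and asks for the inclusion $\Ker\tr_{A/R}\subseteq m_A$; your only gesture toward this is the vague hyperplane-pullback remark, made under a different hypothesis. It could be salvaged: with $k_A=k_R$ one has $\Ker\tr_{k_A/k_R}=0$, so $\Ker\tr_{\overline A/k_R}=m_A/m_RA$, and then any $x\in\Ker\tr_{A/R}$ reduces into $m_A/m_RA$, i.e.\ $x\in m_A$. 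But you do not actually carry this specialization out.

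The paper's proof of (2) is shorter and does not go back through Lemma~\ref{lem:computing tr over a field}: if $x\notin m_A$ then, since $k_A=k_R$, write $x=\lambda+y$ with $\lambda\in R^{*}$ and $y\in m_A$; then $\tr_A(x)=(\rk A)\lambda+\tr_A(y)\in R^{*}+m_R=R^{*}$ by part (1), contradicting $x\in\Ker\tr_A$.
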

\begin{enumerate}
\item $\tr_{A/R}(m_{A})\subseteq m_{R}$;
\item if $k_{A}=k_{R}$ and $\rk A\in R^{*}$ then $\Ker\tr_{A/R}\subseteq m_{A}$.\end{enumerate}
\begin{proof}
Since $\tr_{A/R}\otimes_{R}k_{R}=\tr_{(A\otimes_{R}k_{R})/k_{R}}$
point $1)$ follows from \ref{lem:computing tr over a field} because
$ $$\tr_{(A\otimes_{R}k_{R})/k_{R}}(m_{A\otimes_{R}k_{R}})=0$. Assume
now the hypothesis of $2)$ and let $x\in\Ker\tr_{A}$. If $x\notin m_{A}$
there exists $\lambda\in R^{*}$ such that $y=x-\lambda\in m_{A}$,
so that
\[
\tr_{A}(x)=0=\rk A\lambda+\tr_{A}(y)\in R^{*}+m_{R}=R^{*}
\]
which is impossible.\end{proof}
\begin{lem}
\label{lem:when tr is surjective} If $R$ and $A$ are local then
\[
\tr_{A/R}\colon A\arr R\text{ is surjective}\iff h(m_{A},A/R)\text{ and }\car k_{R}\text{ are coprime}
\]
\end{lem}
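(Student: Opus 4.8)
The plan is to reduce to the case where the base is a field and then read off the answer from Lemma \ref{lem:computing tr over a field}.

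First I would pass to the residue field of $R$. Since $R$ is local, $\tr_{A/R}$ fails to be surjective exactly when its image (an ideal of $R$) is contained in $m_R$, that is, exactly when $\tr_{A/R}\otimes_R k_R$ is the zero map. As the trace commutes with base change, $\tr_{A/R}\otimes_R k_R=\tr_{(A\otimes_R k_R)/k_R}$, and $A\otimes_R k_R=A/m_RA$ is again local with residue field $k_A$. Moreover $h(m_A,A/R)=h(m_{A\otimes_R k_R},(A\otimes_R k_R)/k_R)$ by Lemma \ref{lem:invariance of h}, while $\car k_R$ is unchanged. Hence we may assume $R=k$ is a field and $A$ is local; write $L=k_A$ and let $\pi\colon A\arr L$ be the projection.

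Next I would apply Lemma \ref{lem:computing tr over a field}, giving $\tr_{A/k}=e(m_A,A/k)\,\tr_{L/k}\circ\pi$. The image of a $k$-linear map into $k$ is either $0$ or all of $k$, and $\pi$ is surjective, so $\tr_{A/k}$ is surjective if and only if the positive integer $e(m_A,A/k)$ is nonzero in $k$ (equivalently, coprime with $\car k$) and $\tr_{L/k}\colon L\arr k$ is nonzero. At this point I would invoke the classical fact that $\tr_{L/k}\neq 0$ if and only if $L/k$ is separable: if $L/k$ is separable the trace form is nondegenerate, whereas if $F\subsetneq L$ denotes the maximal separable subextension then $\tr_{L/k}=\tr_{F/k}\circ\tr_{L/F}$ with $\tr_{L/F}=0$, because $L/F$ is purely inseparable of degree a positive power of $\car k$ (for $x\in L\setminus F$ the minimal polynomial over $F$ has the shape $T^{p^{r}}-a$ with $r\geq 1$, so $\tr_{F(x)/F}(x)=0$, and for $x\in F$ one gets $\tr_{L/F}(x)=[L:F]\,x=0$).

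Finally I would combine: $\tr_{A/k}$ is surjective if and only if $e(m_A,A/k)$ is coprime with $\car k$ --- i.e.\ $A/k$ is tame in $m_A$ --- and $L/k$ is separable; by Corollary \ref{cor:tameness and h} this is precisely the condition that $h(m_A,A/k)$ be coprime with $\car k$, as desired. The proof is mainly bookkeeping around Lemma \ref{lem:computing tr over a field}; the sole ingredient not already in the paper is the elementary non-vanishing of the trace form of a separable field extension, and the step warranting a little care is the reduction, where one must check that surjectivity, the invariant $h(-)$, and the residue characteristic all transform correctly under $-\otimes_R k_R$.
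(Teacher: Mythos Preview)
Your proof is correct and follows essentially the same route as the paper's: reduce to the residue field (the paper phrases this as Nakayama, you phrase it as the image being an ideal of the local ring $R$), apply Lemma~\ref{lem:computing tr over a field} to factor the trace, and conclude via Corollary~\ref{cor:tameness and h}. The only difference is that you spell out the classical equivalence $\tr_{L/k}\neq 0\iff L/k$ separable and explicitly invoke Lemma~\ref{lem:invariance of h} for the invariance of $h$ under the reduction, whereas the paper takes both as understood.
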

\begin{proof}
By Nakayama's lemma $\tr_{A/R}\colon A\arr R$ is surjective if and
only if $\tr_{A/R}\otimes_{R}k_{R}=\tr_{A\otimes k_{R}/k_{R}}\colon A\otimes k_{R}\arr k_{R}$
is so. Thus we can assume that $R=k$ is a field. By \ref{lem:computing tr over a field}
$\tr_{A/k}$ is surjective if and only if $\tr_{k_{A}/k}$ is surjective,
i.e. $k_{A}/k$ is separable, and $e(m_{A},A/k)\in k^{*}$. The result
then follows from \ref{cor:tameness and h}.
\end{proof}

\section{Regularity of finite extensions of DVR}

We fix a DVR $R$ and a finite and flat $R$-algebra $A$, so that
$A$ is free of finite rank $\rk A$ as $R$-module. We will use the
following notation

\[
\widetilde{\tr}_{A/R}\colon A\arr\duale A\comma a\longmapsto\tr_{A/R}(a\cdot-)
\]
\[
\shQ_{A/R}=\Coker(A\arrdi{\widetilde{\tr}_{A/R}}\duale A)\comma f^{A/R}=\l(\shQ_{A/R})
\]
where $\l$ denotes the length function. For simplicity we will replace
$A/R$ with $A$ if no confusion can arise. %
\begin{comment}
\begin{thm}
We have the inequality
\[
f^{A/R}\geq\rk A-|\Spec(A\otimes_{R}\overline{k_{R}})|
\]
and the following conditions are equivalent:
\begin{enumerate}
\item the equality holds in the inequality above;
\item $A$ is regular, tame and with separable residue fields over $R$;
\item $A$ is tame with separable residue fields over $R$ and the $R$-module
$\shQ_{A}$ is defined over $k_{R}$, that is $m_{R}\shQ_{A}=0$.\end{enumerate}
\end{thm}
\end{comment}

\begin{rem}
\label{rem: first considerations on traces} By standard arguments
we have that $f^{A}$ coincides with the valuation over $R$ of $\det(\widetilde{\tr}_{A})$.
Moreover the following conditions are equivalent:
\begin{enumerate}
\item $A$ is generically étale over $R$;
\item $f^{A}<\infty$;
\item $\tr_{A}\colon A\arr\duale A$ is injective.
\end{enumerate}
In particular we see that all three conditions in \ref{main} implies
that $A/R$ is generically étale. This also means that $A/R$ is tame
with separable residue fields if and only if $A\otimes_{R}k_{R}/k_{R}$,
or $A\otimes_{R}\overline{k_{R}}/\overline{k_{R}}$, is so.

If $\beta=\{x_{0},\dots,x_{n}\}$ is an $R$-basis of $A$ then the
matrix of $\widetilde{\tr}_{A}\colon A\arr\duale A$ with respect
to $\beta$ and its dual is given by $T=(\tr_{A}(x_{i}x_{j}))_{i,j}$.
In particular we can conclude that, if $\tr_{A}\colon A\arr R$ is
not surjective, then $f^{A}\geq\rk A$, because all entries of $T$
belongs to $m_{R}$. If $\tr_{A}(1)=\rk A\in R^{*}$ we have a decomposition
$A=R1\oplus\Ker\tr_{A}$. In particular if $x_{0}=1$ and $x_{1},\dots,x_{n}\in\Ker\tr_{A}$
(such a basis always exists locally) the matrix $T$ has the form
\[
\left(\begin{array}{cc}
\rk A & 0\\
0 & N
\end{array}\right)
\]
and therefore $\shQ_{A}\simeq\Coker N$.
\end{rem}

\begin{rem}
\label{rem: passing to the strict henselization} Let $R^{s}$ be
the strict Henselization of $R$ and set $A^{s}=A\otimes_{R}R^{s}$.
In particular $R^{s}$ is a discrete valuation ring with residue field
the separable closure $k_{R}^{s}$ of $k_{R}$. Using that the extension
$R\arr R^{s}$ is faithfully flat and unramified, it is easy to see
that 
\[
f^{A/R}=f^{A^{s}/R^{s}}\comma|\Spec(A\otimes_{R}\overline{k_{R}})|=|\Spec(A^{s}\otimes_{R^{s}}\overline{k_{R^{s}}})|
\]
that $\shQ_{A/R}$ is defined over $k_{R}$ if and only if $\shQ_{A^{s}/R^{s}}$
is defined over $k_{R^{s}}$ and that $A/R$ is tame with separable
residue fields if and only if $A^{s}/R^{s}$ is so.

Since $R^{s}$ is Henselian, we have a decomposition $A^{s}=A_{1}\times\cdots\times A_{q}$
where the $A_{j}$ are local rings finite and flat over $R^{s}$.
Moreover $A_{j}\otimes_{R^{s}}\overline{k_{R^{s}}}$ are still local,
so that $q=|\Spec(A\otimes_{R}\overline{k_{R}})|$. Since $\widetilde{\tr}_{A^{s}/R^{s}}\colon A^{s}\arr\duale{(A^{s})}$
is the direct sum of the $\widetilde{\tr}_{A_{j}/R^{s}}\colon A_{j}\arr\duale{(A_{j})}$
we have
\[
\shQ_{A^{s}/R^{s}}\simeq\bigoplus_{j}\shQ_{A_{j}/R^{s}}\text{ and }f^{A^{s}/R^{s}}=\sum_{j}f^{A_{j}/R_{s}}
\]
Finally we have that the following three conditions are equivalent:
$A$ is regular; $A^{s}$ is regular; $A_{j}$ is regular for all
$j=1,\dots,q$.\end{rem}
\begin{proof}
(of \ref{main}) By \ref{rem: first considerations on traces} and
\ref{rem: passing to the strict henselization} we can assume that
$R$ is strictly Henselian, that $A$ is local, so that $|\Spec(A\otimes_{R}\overline{k_{R}})|=1$,
and that $A/R$ is generically étale. Moreover in this case the following
three conditions are equivalent by \ref{cor:tameness and h} and \ref{lem:when tr is surjective}:
$A/R$ is tame with separable residue fields; $k_{A}=k_{R}$ and $\rk A\in R^{*}$;
$\tr_{A/R}\colon A\arr R$ is surjective.

\emph{Inequality and }$1)\iff3)$. By \ref{rem: first considerations on traces}
we can assume that $\tr_{A}\colon A\arr R$ is surjective, so that
$k_{A}=k_{R}$, $\rk A\in R^{*}$. By \ref{cor:ker of the trace}
we also have $\Ker\tr_{A}\subseteq m_{A}$ and $\tr_{A}(m_{A})\subseteq m_{R}$.
Using \ref{rem: first considerations on traces} and its notation,
we see that $f^{A}=v_{R}(\det N)$. If $i,j\geq1$ then $x_{i}x_{j}\in m_{A}$
and thus $\tr_{A}(x_{i}x_{j})\in m_{R}$, that is all entries of $N$
belongs to $m_{R}$. If $\pi\in m_{R}$ is an uniformizer of $R$
we can write $N=\pi N'$ where $N'$ is a matrix with entries in $R$.
In particular
\[
\det N=\pi^{\rk A-1}\det N'
\]
Applying the valuation of $R$ we get $f^{A}=v_{R}(\det N)=\rk A-1+v_{R}(\det N')$
and the desired inequality. 

If $\shQ_{A}\simeq\Coker(N)$ is defined over $k_{R}$ we obtain a
surjective map $k_{R}^{\rk A-1}\arr\shQ_{A}\otimes k_{R}\simeq\shQ_{A}$
and therefore that $f^{A}=\l(\shQ_{A})\leq\rk A-1$. Conversely, if
$f^{A}=\rk A-1$, which means that $N'\colon R^{\rk A-1}\arr R^{\rk A-1}$
is an isomorphism, we have $\shQ_{A}\simeq\Coker(\pi N')$, so that
$\shQ_{A}\simeq k_{R}^{\rk A-1}$ is defined over $k_{R}$ as required.

$2)\then1)$ Let $t\in A$ be a generator of the maximal ideal. The
$k_{R}$-algebra $A\otimes k_{R}$ is local, with residue field $k_{R}$
and its maximal ideal is generated by the projection $\overline{t}$
of $t$. It is easy to conclude that $A\otimes k_{R}=k_{R}[X](X^{N})$
where $N=\rk A$ and $X$ corresponds to $\overline{t}$. By Nakayama's
lemma it follows that $1,t,\dots,t^{N-1}$ is an $R$-basis of $A$
and thus that $A\simeq R[Y]/(Y^{N}-g(Y))$ where $Y$ corresponds
to $t$, $\deg g<N$ and all coefficients of $g$ are in $m_{R}$.
Since $m_{A}=\langle t,m_{R}\rangle_{A}$, the condition that $A$
is regular tells us that $v_{R}(g(0))=1$. We are going to compute
the valuation of the determinant of $\widetilde{\tr_{A}}\colon A\arr\duale A$
writing this map in terms of the basis $1,t,\dots,t^{N-1}$, that
is the valuation of the determinant of the matrix $(\tr_{A}(t^{i+j}))_{0\leq i,j<N}$
(see \ref{rem: first considerations on traces}). Set $q_{s}=\tr_{A}(t^{s})$.
By \ref{cor:ker of the trace} or a direct computation we know that
$q_{S}\in m_{R}$ for $s>0$. In particular, since $v_{R}(g(0))=1$,
we also have $v_{R}(q_{N})=1$. Moreover it follows by induction that
$v_{R}(q_{s})>1$ if $s>N$. Set $S_{N}$ for the group of permutations
of the set $\{0,1,\dots,N-1\}$. We have
\[
\det((\tr_{A}(t^{i+j}))_{0\leq i,j<N})=\sum_{\sigma\in S_{N}}(-1)^{\text{sgn}(\sigma)}z_{\sigma}\text{ where }z_{\sigma}=q_{0+\sigma(0)}q_{1+\sigma(1)}\cdots q_{N-1+\sigma(N-1)}
\]
We claim that $v_{R}(z_{\sigma})\geq N$ for all $\sigma\in S_{n}$
but the permutation $\sigma(0)=0$ and $\sigma(i)=N-i$ for $i\neq0$,
for which $v_{R}(z_{\sigma})=N-1$. This will conclude the proof.
Let $\sigma\in S_{N}$. If $\sigma(0)\neq0$, then all the $N$-factors
of $z_{\sigma}$ are in $m_{R}$, which implies that $v_{R}(z_{\sigma})\geq N$.
Thus assume that $\sigma(0)=0$. Since $q_{0}=\tr_{A}(1)=\rk A\in R^{*}$
we see that $z_{\sigma}$ is, up to $q_{0}$, the product of $N-1$
elements of $m_{R}$. If one of those factors is of the form $q_{i+\sigma(i)}$
with $i+\sigma(i)>N$ then $v_{R}(z_{\sigma})\geq N$ because $v_{R}(q_{s})\geq2$
if $s>N$. Thus the only case left is when $\sigma(i)\leq N-i$ for
all $0<i<N$ and $\sigma(0)=0$. But, arguing by induction, this $\sigma$
is unique and it is given by $\sigma(0)=0$ and $\sigma(i)=N-i$.
In this case we have
\[
z_{\sigma}=\rk A(\tr(t^{N}))^{N-1}
\]
and therefore $v_{R}(z_{\sigma})=N-1$ as required. 

$1)\then2)$ Since $1)\iff3)$, we already know that $A/R$ is tame
with separable residue fields. We have to prove that $A$ is regular.
Set $k(R)$ for the fraction field of $R$. Since $A\otimes k(R)$
is etale over $k(R)$, it is a product of fields $L_{1},\dots,L_{s}$
which are separable extensions of $k(R)$. Let $B$ be the integral
closure of $R$ inside $A\otimes k(R)$. We have that $A\subseteq B$
and that $B=B_{1}\times\cdots\times B_{s}$ where $B_{i}$ is the
integral closure of $R$ inside $L_{i}$. Since $R$ is strictly Henselian
and the $B_{i}$ are domains, it follows that they are local. Moreover
since $R$ is a DVR we can also conclude that the $B_{i}$ are DVR.
We are going to prove that $s=1$ and $A=B$. Notice that $\rk B=\rk A$
and denote by $j\colon A\arr B$ the inclusion. By computing $\tr_{A}$
and $\tr_{B}$ over $A\otimes k(R)\simeq B\otimes k(R)$ we can conclude
that $(\tr_{B})_{|A}=\tr_{A}$. In particular we obtain a commutative
diagram of free $R$-modules   \[   \begin{tikzpicture}[xscale=1.5,yscale=-1.2]     \node (A0_0) at (0, 0) {$A$};     \node (A0_1) at (1, 0) {$\duale A$};     \node (A1_0) at (0, 1) {$B$};     \node (A1_1) at (1, 1) {$\duale B$};     \path (A0_0) edge [->]node [auto] {$\scriptstyle{\widetilde{ \tr_A}}$} (A0_1);     \path (A0_0) edge [->]node [auto] {$\scriptstyle{j}$} (A1_0);     \path (A1_0) edge [->]node [auto] {$\scriptstyle{\widetilde{ \tr_B}}$} (A1_1);     \path (A1_1) edge [->]node [auto] {$\scriptstyle{\duale j}$} (A0_1);   \end{tikzpicture}   \] Notice
that $\det j=\det\duale j$. Thus taking determinants and then valuations
we obtain the expression
\[
f^{A}=2v_{R}(\det j)+f^{B}=2v_{R}(\det j)+\sum_{i=1}^{s}f^{B_{i}}
\]
Since $k$ is separably closed and thanks to \ref{cor:Length of geometric fiber}
we have that $|\Spec(B_{i}\otimes_{R}\overline{k})|=1$. In particular
$f^{B_{i}}\geq\rk B_{i}-1$ by the inequality in the statement. Since
$f^{A}=\rk A-1$ we get
\[
s\geq1+2v_{R}(\det j)
\]
We are going to prove that $v_{R}(\det j)\geq s-1$. This will end
the proof because it implies $s=1$ and $v_{R}(\det j)=0$, that is
$B=A$. Since $k_{A}=k_{R}$ we have
\[
v_{R}(\det j)=\l(B/A)\geq\dim_{k_{R}}(B/(A+m_{A}B))
\]
Denote by $e_{1},\dots,e_{s}\in B$ the idempotents corresponding
to the decomposition $B=B_{1}\times\cdots\times B_{s}$. We will prove
that $e_{2},\dots,e_{s}$ are $k_{R}$-linearly independent in $B/(A+m_{A}B)$,
that is we prove that if 
\[
x=x_{1}e_{1}+\cdots+x_{s}e_{s}\in A+m_{A}B\text{ where }x_{1}=0\text{ and }x_{i}\in R
\]
 then all $x_{i}$ are in the maximal ideal $m_{R}$. Notice that,
since $k_{A}=k_{R}$, $1$ and $m_{A}$ generates $A$ as $R$-module.
In particular $A+m_{A}B$ is generated by $1$ and $m_{A}B$ as $R$-module.
Moreover since the maps $A\arr B\arr B_{i}$ map $m_{A}$ inside $m_{B_{i}}$
it follows that $m_{A}B\subseteq m_{B_{1}}\times\cdots\times m_{B_{s}}$.
Thus $x$ can be written as 
\[
x=\alpha+c_{1}e_{1}+\cdots+c_{s}e_{s}\text{ with }\alpha\in R\text{ and }c_{i}\in m_{B_{i}}
\]
inside $B$. In particular $0=x_{1}=\alpha+c_{1}$ in $B_{1}$, which
implies that $\alpha\in m_{B_{1}}\cap R=m_{R}$. Thus if $i>0$ we
have
\[
x_{i}=\alpha+c_{i}\in m_{B_{i}}\cap R=m_{R}
\]
as required.
\end{proof}
We show via some examples how the conditions of tameness and separability
of residue fields in \ref{main} cannot be omitted. %
\begin{comment}
\begin{example}
Consider a DVR $R$ with $\car R=0$ and whose residue field has characteristic
$2$ and consider the $R$-algebra
\[
A=\frac{R[X]}{(X^{2}-2aX-b)}\text{ with }a,b\in R
\]
Notice that, up to a change of variable, we can assume that $b\in m_{R}$
or $b\notin k_{R}^{2}$. We have that $\tr_{A}(X)=2a$ and $\tr_{A}(X^{2})=4a^{2}+2b$,
so that the matrix of $\widetilde{\tr_{A}}\colon A\arr A^{\vee}$
is
\[
M=\left(\begin{array}{cc}
2 & 2a\\
2a & 4a^{2}+2b
\end{array}\right)=2\left(\begin{array}{cc}
1 & a\\
a & 2a^{2}+b
\end{array}\right)=2M'
\]
The ring $A$ is local, $\rk A=2$, $|\Spec(A\otimes_{R}\overline{k_{R}})|=1$
and $f^{A}\geq2(>\rk A-|\Spec(A\otimes_{R}\overline{k_{R}})|)$. Moreover
we have:
\begin{enumerate}
\item $\shQ_{A}$ is defined over $k_{R}$ if and only if $\det M'=a^{2}+b\in R^{*}$;
\item $A$ is regular if and only if $v_{R}(b)\leq1$;
\item $A$ has separable residue field if and only if $b\in m_{R}$;
\item $A/R$ is tame if and only if $b\in R^{*}$.
\end{enumerate}
Thus we see that in general there are no implications between the
condition that $\shQ_{A}$ is defined over $k_{R}$ and the regularity
condition and that none of them imply the equality $f^{A}=\rk A-|\Spec(A\otimes_{R}\overline{k_{R}})|$.
\end{example}
\end{comment}

\begin{example}
Let $R=\Z_{2}$ be the ring of $2$-adic numbers and consider the
$R$-algebra
\[
A=\frac{R[X]}{(X^{2}-c)}\text{ with }c\in R
\]
We have that $\tr_{A}(X)=0$ and $\tr_{A}(X^{2})=2c$, so that the
matrix of $\widetilde{\tr_{A}}\colon A\arr A^{\vee}$ is
\[
M=\left(\begin{array}{cc}
2 & 0\\
0 & 2c
\end{array}\right)
\]
In particular $f^{A}=v_{R}(\det M)\geq2>\rk A-|\Spec(A\otimes_{R}\overline{k_{R}})|$
and $\shQ_{A}$ is defined over $\F_{2}$ if and only if $c\in R^{*}$. 

Assume $c=2t+d^{2}$ with $t,d\in R$, so that $A/m_{R}A\simeq\F_{2}[Y]/(Y^{2})$,
$A$ is local with maximal ideal $(m_{R},X-d)$, has separable residue
fields and it is not tame. We see that $\shQ_{A}$ is defined over
$\F_{2}$ if and only if $d\in R^{*}$, while $A$ is regular if and
only if $t\in R^{*}$. 

Assume $c\in R^{*}$ and that $c$ is not a square in $\F_{2}$. In
this case $A$ is local with maximal ideal $m_{R}A$, $A/R$ is tame,
its residue field is not separable, it is regular and $\shQ_{A}$
is defined over $\F_{2}$.

\end{example}

\end{document}